\documentclass[graybox]{svmult}

\usepackage{type1cm}        
\usepackage{makeidx}         
\usepackage{graphicx}        
\usepackage{multicol}        
\usepackage[bottom]{footmisc}

\usepackage{newtxtext}       %
\usepackage[varvw]{newtxmath}       

\makeindex             

\renewcommand{\div}{{\hbox{div}}}
\newcommand{\dx}{\, dx}
\newcommand{\ds}{\, ds}
\newcommand{\T}{\mathcal{T}_h}
\newcommand{\C}{\mathcal{C}_h}
\newcommand{\IRT}{\mathcal{IRT}}
\newcommand{\DD}{\mathcal{D}_h}
\newcommand{\M}{\mathcal{M}_h}
\newcommand{\F}{\mathcal{F}_h}
\newcommand{\N}{\mathcal{N}_h}

\newcommand{\nrmf}[1]{\|#1\|_{\mathcal{M}_h}}
\newcommand{\nrmD}[1]{\|#1\|_{\mathcal{D}_h}}

\begin{document}

\title*{Conservative flux reconstruction for an elliptic interface  problem using CutFEM}
\author{Aimene Gouasmi\orcidID{0009-0001-1897-9160} and\\ Daniela Capatina\orcidID{0009-0001-0380-0412}}
\institute{Aimene Gouasmi \at LMAP \& CNRS UMR 5142, University of Pau and Pays de l'Adour, IPRA BP 1155, 64013 Pau, France.  \email{aimene.gouasmi@univ-pau.fr}
	\and Daniela Capatina \at LMAP \& CNRS UMR 5142, University of Pau and Pays de l'Adour, IPRA BP 1155, 64013 Pau, France.  \email{daniela.capatina@univ-pau.fr}}
%
%
\maketitle

\abstract*{This paper deals with the local recovery of conservative fluxes for an elliptic interface problem with discontinuous coefficients. The transmission conditions on the interface are imposed weakly and the discretisation is achieved by using conforming finite elements on unfitted meshes, with the aid of the CutFEM method. In a first attempt at flux reconstruction, we define a flux belonging to the Raviart-Thomas space of each sub-domain following the method developed for a boundary problem. However, the transmission condition is not satisfied by the recovered flux. In order to overcome this shortcoming, we propose a second approach, where the flux belongs to the recently introduced immersed Raviart-Thomas space. This ensures both the continuity of the normal flux across the interface and a natural conservation property on the cut cells. Subsequently, we apply the recovered flux to a posteriori error analysis and prove the sharp reliability of the error estimator.
}
\abstract{This paper deals with the local recovery of conservative fluxes for an elliptic interface problem with discontinuous coefficients. The transmission conditions on the interface are imposed weakly and the discretisation is achieved by using conforming finite elements on unfitted meshes, with the aid of the CutFEM method. In a first attempt at flux reconstruction, we define a flux belonging to the Raviart-Thomas space of each sub-domain following the method developed for a boundary problem. However, the transmission condition is not satisfied by the recovered flux. In order to overcome this shortcoming, we propose a second approach, where the flux belongs to the recently introduced immersed Raviart-Thomas space. This ensures both the continuity of the normal flux across the interface and a natural conservation property on the cut cells. Subsequently, we apply the recovered flux to a posteriori error analysis and prove the sharp reliability of the error estimator.
}
\section{Introduction}

The importance of reconstructing conservative local fluxes from primal discrete solutions is well acknowledged in the literature, see for instance \cite{Ern_flux,Dana2016,Aimene,Capatina-He}. Such  fluxes play an important role in  a posteriori error analysis \cite{Aposteriori1,Aposteriori2}, where the difference between  the  numerical flux and a recovered equilibrated flux provides a reliable error indicator, which can be further used in adaptive mesh refinement procedures.

This paper focuses on a $2$D second-order elliptic interface problem, characterized by discontinuous coefficients and \textcolor{black}{standard transmission conditions for the solution and the normal flux} on the interface. 

The Cut Finite Element Method \cite{CutFEM2}, designed for  the case where the mesh does not align with the interface, is employed  for numerical approximation. Our goal is to recover local conservative fluxes in the Raviart-Thomas space, extending a methodology introduced in \cite{Dana2016} for the Poisson equation on fitted meshes. The construction is based on a mixed problem, with the primal solution coinciding with the original finite element solution and with the multiplier, defined on the \textcolor{black}{edges} of the mesh, being naturally used to define the degrees of freedom of the fluxes. It is important  to note that the multiplier can be computed locally, by solving an explicit low-dimensional linear system for each vertex. This approach has recently been extended to a boundary problem on \textcolor{black}{an} unfitted mesh in \cite{Capatina-He}.
 
Here, we treat the interface diffusion problem discretized by \textcolor{black}{piecewise linear} conforming elements on \textcolor{black}{an} unfitted mesh. We  first show how to reconstruct a $H(div)-$flux on each sub-domain $\Omega^i$ \textcolor{black}{($1\le i\le 2$)}, based on a hybrid mixed formulation with Lagrange multipliers associated to each $\Omega^i$. On the cut elements, we  define the multipliers, and hence the normal fluxe\textcolor{black}{s}, on the whole edges. In each $\Omega^i$, the local conservation on a cut element is achieved with respect to an extension of the data $f^i$ to the whole element. We thus obtain a robust reconstruction with respect to both the interface/mes\textcolor{black}{h} geometry  and the diffusion coefficients. However, this method  does not respect the  transmission condition on the normal flux. To overcome this inconvenience, we propose an alternative reconstruction, based on the  immersed Raviart-Thomas space recently introduced in \cite{IRT}. This space  insures that  the transmission condition is strongly satisfied, and in addition, the conservation property is naturally satisfied  on the cut cells. Finally,  we use this recovered flux in the a posteriori error analysis and prove the sharp reliability of the error estimator.

\section{The continuous  and  discrete problems}
\label{sec:1}

Let $\Omega$ be a 2D polygonal domain and $\Gamma$  an interface separating $\Omega$ into two disjoint sub-domains: $\bar{\Omega}=\bar{\Omega}^{1}\cup \bar{\Omega}^{2}$, $\partial\Omega^{1}\cap \partial\Omega^{2}=\Gamma$. We denote by $n_{\Gamma}$ the unit normal vector to $\Gamma$ oriented from $\Omega^1$ to $\Omega^2$.
We consider the following  model problem:
\begin{equation}\label{eq: continuous_problem}
	\left\{
	\begin{split} 
		-div(K\nabla u^{i})=f^i \quad &\text{in }\Omega^i,\,\, i=1,2 \\
		u=0 \quad & \text{on }\partial \Omega \\ 
		[u]=0,\,\,[K\nabla u\cdot n_{\Gamma}]=\textcolor{black}{0}  \quad &\text{on }\Gamma,
	\end{split}
	\right.
\end{equation}
where $[u]=u^1-u^2$ is the jump across $\Gamma$. We suppose that $f^i\in L^2(\Omega^i)$ and for the sake of simplicity, we assume here that  $K|_{\Omega^i}=k_i>0$, for $i=1,2$. The approach can be extended to piecewise constant positive definite tensors $K$, \textcolor{black}{as well as to a non-zero jump of the normal fluxes on the interface.}

In view of the finite element approximation of \eqref{eq: continuous_problem}, we introduce some notation. Let $\mathcal{T}_h$ be a triangular regular mesh of $\Omega$\textcolor{black}{, whose elements are closed sets}. We denote by $\mathcal{F}_h$ the set of \textcolor{black}{edges}. The diameter of $T\in \T$ (and \textcolor{black}{the length} of $F\in \F$) is denoted by $h_T$ (and $h_F$). For an interior \textcolor{black}{edge} $F$, $n_F$ denotes a fixed, unit normal  vector to $F$, oriented from $T_F^-$ towards $T_F^+$, with $T_F^-$, $T_F^+$ the two triangles sharing the \textcolor{black}{edge} $F$. If $F\subset \partial \Omega$, we take $n_F$ the outward normal vector to $\Omega$, whereas for $F\subset \Gamma$, we set $n_F=n_{\Gamma}$. For $\omega\subset \mathbb R^d$ with $1\leq d \leq 2$, we denote by $\|\cdot\|_\omega$ the $L^2(\omega)$-norm and by $\pi^{\textcolor{black}{m}}_{\omega}$ the $L^2(\omega)$-orthogonal projection on $P^{\textcolor{black}{m}}(\omega)$, for $\textcolor{black}{m}\in \mathbb{N}$.
We consider, for $i=1,2$, 
\begin{equation*}
\mathcal{T}_h^{i} = \big\{ T \in \mathcal{T}_h \ ; \ T\cap \Omega^{i}\neq \emptyset \big\},\quad \mathcal{F}_h^{i} = \big\{ F \in \mathcal{F}_h \ ; \ F\cap \Omega^{i}\neq \emptyset \big\}
\end{equation*}
and define       
$\Omega_h^i=\bigcup_{T\in \mathcal{T}_h^i}T$. Note that $\Omega^i\subset \Omega_h^i$. Regarding the cut elements, we put 
\begin{equation*}
	\begin{split}
		\mathcal{T}_h^{\Gamma}= \big\{ T\in\mathcal{T}_h \ ;\ T\cap \Gamma\neq \emptyset \big\},&\quad   \mathcal{F}_h^{\Gamma} =\big\{ F \in \F \ ;\ F\cap \Gamma \neq \emptyset\ \big\},\\
		\mathcal{F}_g^i =\big\{ F\in\mathcal{F}_h^i \ ;\ (T_F^+\cup T_F^-)\cap\Gamma \neq \emptyset \big\},&\quad T^i=T\cap\Omega^i \,\,\, \forall T\in \T^{\Gamma}.
	\end{split}
\end{equation*}

\textcolor{black}{In order to focus on the flux reconstruction, we assume in this paper that the interface $\Gamma$ is a polygonal line, more specifically that } for each $T\in \T^{\Gamma}$, the intersection $\Gamma_T=T \cap \Gamma $ is a line. For a function $v$ discontinuous across $\Gamma$, we denote $v^i=v_{|\Omega^i}$ and we define the following two means at $x\in \Gamma$:
$$ \{v\}(x)=\omega_{1} v^{1}(x)+\omega_{2}v^{2}(x),\quad \{v\}^*(x)=\omega_{2} v^{1}(x)+\omega_{1}v^{2}(x),$$ 
where the weights $\omega_{1}$, $\omega_{2}$ as well as the harmonic mean $k_{\Gamma}$ are given (cf. \cite{Ern}) by: 
$$ \omega_{1}=\frac{k_{2}}{k_{2}+k_{1}},\quad \quad \omega_{2}=\frac{k_{1}}{k_{1}+k_{2}}, \quad k_{\Gamma}=\frac{k_{1}k_{2}}{k_{1}+k_{2}}. $$
It is useful to introduce the arithmetic mean $\langle v\rangle=\frac{1}{2}(v^{-}+v^+)$ and the jump $[\![v]\!] =v^{-}-v^+$ across an interior \textcolor{black}{edge} $ F\in \F^i$; for a boundary \textcolor{black}{edge}, we set $\langle v\rangle=[\![v]\!]=v$.   Finally, for $i=1,2$, let
$V^i=\big\{ v \in H^1(\Omega_h^i);\, v_{|(\partial\Omega^i\setminus\Gamma)}=0\big\}$
and
$$\C^i=\big\{ v \in V^i:\,  v_{|T}\in P^1(T),\,\, \forall \ T\in  \mathcal{T}_h^{i} \big\},\quad \C=\C^{1}\times \C^{2}. $$

\textcolor{black}{We consider the following weak formulation of problem \eqref{eq: continuous_problem}: Find  $u \in H^1_0(\Omega) $ s.t.
\begin{equation}\label{eq: continuous_problem_weak}
\int_{\Omega}K\nabla u \cdot \nabla v\dx= \int_{\Omega}f v\dx\quad \forall v\in H^1_0(\Omega).
\end{equation}
}Regarding \textcolor{black}{its} numerical approximation, we use Nit\textcolor{black}{s}che's method to take into account the transmission conditions on $\Gamma$ \cite{Nitche}. Moreover, we use CutFEM \cite{CutFEM2} to stabilize the approach with respect to the geometry of the interface, by adding a ghost penalty term. The discrete problem reads: Find  $u_h=(u_h^1,u_h^2) \in \C $ s.t.
\begin{equation}\label{eq:Primal_conform_Formulation}
	a_h(u_h,v_h)=l_h(v_h) \quad \forall v_h\in \C,
\end{equation}
where  $ a_h(u_h,v_h)=\displaystyle{\sum_{i=1}^{2}}\bigg(a_i(u_h^i,v_h^i)+\beta j_i(u_h^i,v_h^i)\bigg)+a_{\Gamma}(u_h,v_h)$ with:
\begin{equation*}
\begin{split}
a_i(u_h^i,v_h^i)=&\textcolor{black}{\sum_{T\in\T^{i}}}\int_{\textcolor{black}{T\cap}\Omega^i}k_i\nabla u_h^i \cdot \nabla v_h^i\dx,\quad  j_i(u_h^i,v_h^i)=\sum_{F\in\mathcal{F}_g^i} h_F \int_{F}k_i[\![\partial_n u_h^{i}]\!][\![\partial_n v_h^{i}]\!]\ds,\\
a_{\Gamma}(u_h,v_h)=& \sum_{T\in\T^{\Gamma}}\int_{\Gamma_T}( \frac{\gamma k_{\Gamma}}{h_T}[u_h][v_h]-\{K\nabla u_h\cdot n_{\Gamma}\}[v_h]-\{K\nabla v_h\cdot n_{\Gamma}\}[u_h])\ds,\\
l_h(v_h)=& \sum_{i=1}^{2}\int_{\Omega^i}f^i v_h^i\dx. 
\end{split}
\end{equation*}
The stabilization parameters $\gamma,\beta\textcolor{black}{>0}$ can be chosen independently of the mesh/interface geometry and of the diffusion coefficients. For any $v_h\in \C$, we define the norm:
\begin{equation*}
	\|v_h\|_h^2= \sum_{i=1}^{2}\bigg(k_i\|\nabla v_h^i\|^2_{\Omega^i}+j_i(v_h^i,v_h^i)\bigg)
	+\sum_{T\in \mathcal{T}_h^{\Gamma}}\int_{\Gamma_T}\frac{k_{\Gamma}}{h_T}[v_h]^2 \ds.
\end{equation*}

For $\gamma$ large enough, it is well known that $a_h(\cdot,\cdot)$ is uniformly $(\C,\|\cdot\|_h)$-coercive. Then problem \eqref{eq:Primal_conform_Formulation} is well-posed thanks to the Lax-\textcolor{black}{Milgram} theorem. 

\section{The auxiliary mixed formulation}
\label{sec:2}
Let $\DD=\DD^1\times \DD^2$ and $\M=\M^1\times \M^2$, where for $i=1,2$ we set:
\begin{equation*}
	\begin{split}
		\DD^i=&\{v\in L^2(\T^i);\, {v}_{|T}\in P^1(T)\,\forall T\in \T^i\},\\
		\M^i=&\{\mu\in L^2(\F^i);\, {\mu}|_{F}\in P^1(F)\, \forall F\in \F ^i,\, 
		\displaystyle\sum_{F\in\mathcal{F}_N}\mathfrak{s}_N^Fh_F{\mu}_{|F}(N)=0 \,\forall N\in \overset{\circ}{\mathcal {N}_h^i}\}.
	\end{split}     
\end{equation*}
Here above, $\overset{\circ}{\mathcal {N}_h^i}$ denotes the set of nodes interior to $\Omega_h^i$, $\mathcal{F}_N$ the set of \textcolor{black}{edges} sharing the node $N$, and $\mathfrak{s}_N^F\textcolor{black}{=sign(n_F, N)}$ \textcolor{black}{is equal to $1$ or $-1$}, depending upon the orientation of $n_F$ with respect to the clockwise rotation sense around $N$. These spaces are endowed with the following norms:
\begin{equation*}
	\begin{split}
		\nrmD{v_h}^2&=\|v_h\|_h^2+ \sum_{i=1}^{2}\sum_{F\in \F^i}\int_F k_i h_F^{-1}[\![v_h^i]\!]^2\ds,\quad \forall v_h\in \DD,\\
		\nrmf{\mu_h}^2 &=\sum_{i=1}^{2}\sum_{F\in \F^i}\int_{F}k_i h_F(\mu_h^i)^2\ds,\quad \forall \mu_h \in \M.      
	\end{split}
\end{equation*}
Following \cite{Dana2016,Aimene}, we consider  the mixed formulation: Find $(\tilde{u}_h,\theta_h)\in \DD\times \M$,
\begin{equation}\label{eq: Mixed formulation}
	\begin{split}
		\tilde {a}_h(\tilde{u}_h,v_h)+&b_h(\theta_h,v_h)=l_h(v_h)\quad \forall v_h \in \mathcal{D}_h,\\
		&b_h(\mu_h,\tilde{u}_h)=0 \quad \qquad\forall \mu_h \in \mathcal{M}_h,
	\end{split}
\end{equation}
where $\tilde{a}_h(\cdot,\cdot)= a_h(\cdot,\cdot)- d_h(\cdot,\cdot)$ and $ b_h(\mu_h,v_h)=\displaystyle{\sum_{i=1}^{2}} b_h^i(\mu_h^i,v_h^i)$, with:

\begin{equation*}
	\begin{split}
		d_h(\tilde u_h,v_h)=&\sum_{i=1}^{2} \sum_{F\in\mathcal{F}_h^{i}}  \int_{F\cap\Omega^i} \bigg(\langle k_i\nabla \tilde {u}_h^i\cdot n_F\rangle [\![v_h^i]\!]   +\langle k_i\nabla v_h^i \cdot n_F\rangle [\![\tilde{u}_h^i]\!]\bigg)\ds,\\
		b_h^i(\mu_h^i,v_h^i)=&\sum_{F\in\mathcal{F}_h^{i}} \frac{k_i h_F}{2}\sum_{N\in\mathcal{N}_F}{\mu_h^i}_{|F}(N)[\![v_h^i]\!](N) \approx \sum_{F\in\mathcal{F}_h^{i}}  \int_{F}k_i\mu_h^i[\![v_h^i]\!]\ds.
	\end{split}
\end{equation*}
Here above, $\mathcal{N}_F$ denotes the set of nodes belonging to $F$. We obtain similar results to \cite{Dana2016,Aimene}, which are presented below. We refer the readers to \cite{Dana2016,Aimene} for more details.    

\begin{theorem}\label{thrm: inf-sup_cond}
	There exists a constant $\tilde{\beta}$ independent of $h$ and $K$ such that 
	\begin{equation*}
		\inf_{\mu_h\in \mathcal{M}_h}\sup_{v_h\in \mathcal{D}_h} \frac{b_h(\mu_h,v_h)}{\nrmf{\mu_h}\nrmD{v_h}}\geq \tilde{\beta}.
	\end{equation*}
\end{theorem}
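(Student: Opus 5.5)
Since $b_h(\mu_h,v_h)=\sum_{i}b_h^i(\mu_h^i,v_h^i)$ and both norms split additively over $i=1,2$, except for the interface term $\sum_{T}\int_{\Gamma_T}\frac{k_\Gamma}{h_T}[v_h]^2\ds$ contained in $\|v_h\|_h$, I will construct, for a given $\mu_h=(\mu_h^1,\mu_h^2)$, a test function $v_h=(v_h^1,v_h^2)$ componentwise. Each component depends only on $\mu_h^i$, on the mesh $\T^i$, and not on $\Gamma$ or on $K$. The construction follows \cite{Dana2016}. Take $v_h^i\in\DD^i$ with nodal values chosen element by element so that, on every edge $F\in\F^i$ and at every node $N\in\mathcal N_F$, the jump satisfies $[\![v_h^i]\!](N)=h_F\,{\mu_h^i}_{|F}(N)$. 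Then
$$b_h^i(\mu_h^i,v_h^i)=\sum_{F\in\F^i}\frac{k_ih_F^2}{2}\sum_{N\in\mathcal N_F}{\mu_h^i}_{|F}(N)^2\simeq \sum_{F}\int_F k_ih_F(\mu_h^i)^2\ds=\nrmf{\mu_h^i}^2 .$$
The equivalence of the nodal quadrature with the $L^2(F)$ norm holds for $P^1(F)$ with absolute constants.

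The existence of such a $v_h^i$ is the key algebraic step, and I expect it to be the main obstacle. Around an interior node $N$ of $\Omega_h^i$, the values $v_{h|T}^i(N)$ for the triangles $T$ of the patch must have prescribed successive differences $\mathfrak s_N^F h_F\mu_h^i(N)$ across the edges $F\in\mathcal F_N$. Going once around the node, these differences must sum to zero, and this is exactly the constraint $\sum_{F\in\mathcal F_N}\mathfrak s_N^Fh_F\mu_{|F}(N)=0$ built into $\M^i$. The values can therefore be fixed up to an additive constant per node, which I set by fixing one triangle's value to $0$. For nodes on $\partial\Omega_h^i$ the patch is not closed, so no constraint is needed: one walks along the open fan and again fixes one value to $0$. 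On boundary edges the jump is the trace itself, and this is compatible because the walk prescribes that value directly. The resulting values satisfy $|v_{h|T}^i(N)|\lesssim \sum_{F\in\mathcal F_N}h_F|\mu_h^i(N)|$, with a constant depending only on the number of triangles in a patch, hence on mesh regularity.

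It then remains to bound $\nrmD{v_h}\lesssim\nrmf{\mu_h}$ uniformly in $h$, $K$ and the cut geometry. The jump term is immediate: $\int_Fk_ih_F^{-1}[\![v_h^i]\!]^2\ds\simeq k_ih_F\,h_F^2\sum_N\mu(N)^2/h_F^{2}\cdot h_F\ldots$, which by scaling is equivalent to $\int_F k_ih_F(\mu_h^i)^2\ds$. For the gradient term, $k_i\|\nabla v_h^i\|^2_{T\cap\Omega^i}\le k_i\|\nabla v_h^i\|^2_T\lesssim k_i\sum_N|v_{h|T}^i(N)|^2$. Since $|T|h_T^{-2}\simeq1$, the nodal bound above gives $\lesssim\sum_{F\in\mathcal F_N}k_ih_F^2\mu(N)^2$, which is controlled by the local part of $\nrmf{\mu_h^i}^2$. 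The ghost penalty term $j_i$ is handled the same way through inverse inequalities on whole triangles, using $h_F\|\nabla v\|_F^2\lesssim\|\nabla v\|_T^2$.

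The interface term needs more care because of $h_T^{-1}$ and the arbitrary length of $\Gamma_T$. I bound $\int_{\Gamma_T}\frac{k_\Gamma}{h_T}[v_h]^2 \le \frac{2k_\Gamma}{h_T}|\Gamma_T|\big(\|v_h^1\|^2_{L^\infty(T)}+\|v_h^2\|^2_{L^\infty(T)}\big)$, and then use $|\Gamma_T|\le h_T$ and $k_\Gamma\le\min(k_1,k_2)\le k_i$. This gives $\lesssim\sum_i k_i\max_N|v^i_{h|T}(N)|^2$, which is again bounded by the local $\nrmf{\cdot}$ contributions. Using only $L^\infty$ bounds on whole triangles avoids any dependence on how $\Gamma$ cuts $T$, and the inequality $k_\Gamma\le k_i$ gives robustness in $K$. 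Summing over elements with finite patch overlap yields $\nrmD{v_h}\le C\nrmf{\mu_h}$ and $b_h(\mu_h,v_h)\ge c\nrmf{\mu_h}^2$. Hence the inf-sup constant is $\tilde\beta=c/C$, independent of $h$ and $K$.
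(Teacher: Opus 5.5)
You follow the construction of \cite{Dana2016}, to which the paper defers without giving its own proof: prescribe $[\![v_h^i]\!]_{|F}(N)=h_F\mu^i_{h|F}(N)$ vertex by vertex, close each loop with the constraint in $\M^i$, and bound $\nrmD{v_h}$ on whole triangles. Those estimates are sound, including the interface term via $|\Gamma_T|\le h_T$ and $k_\Gamma\le k_i$. The step that fails is your treatment of vertices $N\in\partial\Omega$. Away from $\Gamma$, both boundary edges $F_0,F_n\in\mathcal F_N$ belong to $\F^i$. Label the fan $T_1,\dots,T_n$ from $F_0$ to $F_n$. Since $[\![v]\!]=v$ on boundary edges, you must satisfy $v^i_{h|T_1}(N)=h_{F_0}\mu^i_{h|F_0}(N)$ and $v^i_{h|T_n}(N)=h_{F_n}\mu^i_{h|F_n}(N)$, in addition to the $n-1$ interior-edge conditions. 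That is $n+1$ equations for $n$ unknowns. There is no free constant to set to $0$. Once the walk starts at $F_0$, the condition on $F_n$ is an extra equation, and it holds iff $\sum_{F\in\mathcal F_N}\mathfrak s_N^F h_F\mu^i_{h|F}(N)=0$. With outward normals on $\partial\Omega$, the exterior acts as one more element carrying the value $0$, and the loop around $N$ closes exactly as at an interior vertex. So ``the patch is not closed, so no constraint is needed'' is false there.

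This is not cosmetic. Suppose $\M^i$ has no constraint at such a vertex, which is what the literal wording ``nodes interior to $\Omega_h^i$'' gives. Then the statement itself is false. Take $N\in\partial\Omega$ with $\omega_N$ away from $\Gamma$, set $\mu^{3-i}=0$, and let $\mu^i$ vanish at every vertex except $N$, with $h_F\mu^i_{|F}(N)=\mathfrak s_N^F$ for $F\in\mathcal F_N$. Then $\mu\in\M$ and $\mu\neq 0$, yet by the same telescoping $b_h(\mu,v_h)=\frac{k_i}{2}\sum_{F\in\mathcal F_N}\mathfrak s_N^F[\![v_h^i]\!]_{|F}(N)=0$ for every $v_h\in\DD$.

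The repair is to read (or impose) the nodal constraint at every vertex whose $\F^i$-edges close up around it once the exterior of $\Omega$ counts as an element. These are the interior vertices of $\Omega_h^i$ and the vertices of $\partial\Omega$ whose full fan lies in $\T^i$ with both boundary edges in $\F^i$. Only vertices on the artificial boundary $\partial\Omega_h^i\setminus\partial\Omega$ are constraint-free. At a vertex of $\partial\Omega$, start the walk from the exterior value $0$; the constraint then guarantees the trace condition on the last boundary edge. Your bound $|v^i_{h|T}(N)|\le\sum_{F\in\mathcal F_N}h_F|\mu^i_{h|F}(N)|$, and with it the rest of your argument, goes through unchanged.
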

\begin{theorem}\label{thrm: kernel_b_h }
	The discrete kernel of $b_h(\cdot,\cdot)$ coincides with the space $\C$, i.e, 
	\begin{equation*}
		\mathrm{Ker}\,b_h=\left\{ v_h\in\mathcal{D}_h;\, b_h(\mu_h,v_h)=0,\, \forall \mu_h \in \mathcal{M}_h \right\}=\mathcal{C}_h.
	\end{equation*}    
\end{theorem}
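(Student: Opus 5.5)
The kernel of $b_h$ splits over the two sub-domains, so we will prove $\mathrm{Ker}\,b_h^i=\C^i$ for each $i=1,2$ separately. The idea is to reduce everything to nodal values of the jumps $[\![v_h^i]\!]$, since $b_h^i$ only sees these through a trapezoidal quadrature.

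\emph{Inclusion $\C\subset\mathrm{Ker}\,b_h$.} Let $v_h\in\C$. Then $v_h^i$ is continuous on $\Omega_h^i$, so $[\![v_h^i]\!]=0$ on every interior edge of $\F^i$. On the boundary edges of $\F^i$ we have $[\![v_h^i]\!]=v_h^i$, and these values vanish by the boundary condition built into $\C^i$; here we take $\C^i$ to be zero at the nodes of $\partial\Omega_h^i$, as in \cite{Dana2016,Aimene}. Hence $b_h^i(\mu^i,v_h^i)=0$ for every $\mu^i$.

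\emph{Inclusion $\mathrm{Ker}\,b_h\subset\C$.} Take $v_h\in\mathrm{Ker}\,b_h$, fix $i$, and for each edge $F\in\F^i$ and node $N\in\mathcal N_F$ set $J_F(N)=[\![v_h^i]\!](N)$. Then $v_h^i\in\C^i$ exactly when all the $J_F(N)$ vanish, because a piecewise $P^1$ function whose edge jumps vanish at both endpoints is continuous and has zero trace. We also have
\[
b_h^i(\mu,v_h^i)=\sum_{F}\frac{k_ih_F}{2}\sum_{N\in\mathcal N_F}\mu_{|F}(N)J_F(N).
\]
The values $\mu_{|F}(N)$ are independent across pairs $(F,N)$, subject only to one linear constraint at each interior node $N$, namely $\sum_{F\in\F_N}\mathfrak s_N^Fh_F\mu_{|F}(N)=0$. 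We therefore work node by node. At a boundary node there is no constraint, so testing with $\mu$ supported at that single pair $(F,N)$ gives $J_F(N)=0$ directly. At an interior node $N$, the condition $b_h=0$ for all admissible $\mu$ says that the vector $(h_FJ_F(N))_{F\in\F_N}$ is orthogonal to the hyperplane $\{(h_F\mu_F)_F:\sum_F\mathfrak s_N^F h_F\mu_F=0\}$. Hence there is a scalar $c_N$ with $J_F(N)=c_N\mathfrak s_N^F$ for all $F\in\F_N$.

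\emph{Key step: showing $c_N=0$.} The jumps around an interior node telescope. Going around $N$ through the triangles $T_1,\dots,T_m$ of its patch, the signed jumps $\mathfrak s_N^F J_F(N)$ are the successive differences $v_{T_k}(N)-v_{T_{k+1}}(N)$, and these sum to zero: $\sum_{F\in\F_N}\mathfrak s_N^F J_F(N)=0$. Substituting $J_F(N)=c_N\mathfrak s_N^F$ gives $m\,c_N=0$, so $c_N=0$. Thus all the $J_F(N)$ vanish and $v_h^i\in\C^i$.

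The main obstacle is getting the sign convention right, so that $\mathfrak s_N^F$ really is the orientation that turns the jumps into a telescoping cycle. This must hold uniformly for the nodes interior to $\Omega_h^i$, including nodes on cut elements, which is where one must check that the definition of $\F^i$ gives a closed patch. This is the same setup as in \cite{Dana2016} on the submesh $\T^i$, so the argument transfers once $\F^i$ is confirmed to contain all edges of the patches of interior nodes.
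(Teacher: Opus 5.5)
Your argument is correct in substance. The paper gives no proof here and simply defers to the fitted-mesh analysis of \cite{Dana2016,Aimene}; your proof is the natural transfer of that analysis. It rests on two facts: the telescoping identity $\sum_{F\in\mathcal F_N}\mathfrak s_N^F[\![v_h^i]\!]_F(N)=0$ around a closed patch, which is exactly what the constraint in $\M^i$ encodes, and the absence of constraints at nodes of $\partial\Omega_h^i$. You can compress the node-by-node duality step. The identity says precisely that $\mu|_F(N)=h_F^{-1}[\![v_h^i]\!]_F(N)$ defines an element of $\M^i$, and testing with it gives $0=b_h^i(\mu,v_h^i)=\frac{k_i}{2}\sum_{F\in\F^i}\sum_{N\in\mathcal N_F}[\![v_h^i]\!]_F(N)^2$ directly.

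Two points need correcting, and both arise exactly where the unfitted setting differs from \cite{Dana2016}.

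\emph{The boundary condition in $\C^i$.} $\C^i$ is \emph{not} zero at the nodes of $\partial\Omega_h^i$. By the definition of $V^i$ it vanishes only on $\partial\Omega^i\setminus\Gamma$, whereas on the fictitious part of $\partial\Omega_h^i$ (inside $\Omega^j$, $j\neq i$) the CutFEM unknowns are free. With your convention, the claimed equivalence ``$v_h^i\in\C^i$ iff all $J_F(N)=0$'' would be false. At fictitious-boundary nodes your own argument only yields vanishing jumps across interior edges of $\Omega_h^i$, not vanishing values. What you need instead is the following. An edge of $\partial\Omega_h^i$ not on $\partial\Omega$ is contained in a neighbouring triangle disjoint from $\Omega^i$, so it does not belong to $\F^i$. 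Hence the only edges of $\F^i$ carrying the condition $[\![v]\!]=v$ are edges of $\partial\Omega$, namely those meeting $\partial\Omega^i\setminus\Gamma$ in a set of positive length (the intended reading of $\F^i$ there). A $P^1$ function vanishing on a subsegment of such an edge vanishes on all of it.

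\emph{The closed-patch property.} The property you flag cannot be ``confirmed'' from the definition $F\cap\Omega^i\neq\emptyset$. Suppose a small inclusion $\Omega^j$ surrounds a single edge $F$ whose endpoints are interior nodes of $\Omega_h^i$. Then $F\notin\F^i$, and the telescoping sum at these endpoints acquires an extra term. One then easily builds $v_h^i\in\mathrm{Ker}\,b_h^i$ that jumps across $F$. This is an implicit resolution assumption of the statement; equivalently, $\F^i$ must contain every interior edge of $\Omega_h^i$. You should state it as a hypothesis, and under it your proof is complete.
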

Theorem \ref{thrm: kernel_b_h } gives the coercivity of $\tilde{a}_h(\cdot,\cdot)$ on $\mathrm{Ker}\,b_h$, as well as the equivalence between the primal and mixed formulations \eqref{eq:Primal_conform_Formulation} and \eqref{eq: Mixed formulation}. Indeed, one gets that $u_h=\tilde{u}_h$. The Babuska-Brezzi theorem next yields the well-posedness of \eqref{eq: Mixed formulation}. Note that the multipliers $\theta_h^i$ and the forms $b_h^i(\cdot,\cdot)$ are defined on the whole \textcolor{black}{edges; thus, on a cut edge $F\in \mathcal{F}_h^{\Gamma} $ we have two multipliers, both defined on $F$ and not on $F^i=F\cap \Omega^i$.}

A crucial feature of the method is that each multiplier  $\theta_h^i$  can be computed locally, as sum of local contributions defined on patches associated  to the nodes. More precisely, for $i\in \{1,2\} $ we showed that $\theta_h^i=\sum_{N\in \N^i} \theta_N^i$,  where $\theta_N^i\in \M^i$ lives on $\mathcal F_N\cap \mathcal F_h^i$ and is the unique solution of the following low-order linear system:
\begin{equation*}
	\forall T\in \omega_N : \left\{
	\begin{split}
		b_h^i(\theta_N^i,\varphi_N\chi_T)&=r_h^i(\varphi_N\chi_T),\label{theta1}\\
		b_h^i(\theta_N^i,\varphi_M\chi_T)&=0\quad \forall M\in \mathcal{N}_T\setminus \{N\}
	\end{split}\right.
\end{equation*}
where \textcolor{black}{$\chi_T$ is the characteristic function of $T$,} $r_h^1(v_h^1)=r_h((v_h^1,0))$, $r_h^2(v_h^2)=r_h((0,v_h^2))$ with $r_h(\cdot)=l_h(\cdot)-\tilde a_h(u_h,\cdot)$, and where $\omega_N$  is the set of triangles sharing the node $N$. For more details, see \cite{Aimene}. 

\section{Local flux reconstruction}

In the sequel, we propose two reconstructions of conservative fluxes (approximations of $\sigma=K\nabla u$). One follows the approach introduced in \cite{Capatina-He} for an unfitted boundary problem, while the other one uses the immersed Raviart-Thomas space  of \cite{IRT}.
In order to simplify the presentation, we assume here that no \textcolor{black}{edge} is situated on  $\Gamma$.

\subsection{Flux recovering in $\mathcal{RT}^m(\Omega_h^1)\times \mathcal{RT}^m(\Omega_h^2) $}
We begin by reconstructing a flux on each sub-domain $\Omega_h^i$. For $i\in \{1,2\}$, we define $\sigma_h^i\in \mathcal{RT}^m(\Omega_h^i)$ with $m=0$ or $1$, by imposing its degrees of freedom as follows. 

For $F\in \mathcal{F}_h^i$, we set as in \cite{Dana2016, Aimene}
\[ 
\int_{F}\sigma_h^i\cdot n_F w\ds=\int_{F} \langle k_i\nabla u_h^i\cdot n_F\rangle w\ds - \frac{k_i h_F}{2}\sum_{N\in\mathcal{N}_F}{(\theta_h^i}_{|F}w)(N),\quad \forall w\in P^m(F),
\]
whereas for $F\subset \partial \Omega_h^i $ not situated in $\Omega^i$, we simply set $\sigma_h^i\cdot n_F= k_i\nabla u_h^i\cdot n_F$.

 Finally, if $m=1$ we also add interior degrees of freedom: for any $T\in \mathcal T_h^i$,
\begin{equation*}
	\begin{split}
		\int_{T}\sigma_h^i\cdot \zeta \dx=&\int_{T}k_i\nabla u_h^i\cdot \zeta\dx - \int_{\Gamma_T} \omega_ik_i \zeta\cdot n_{\Gamma} [u_h]\ds \\
		&+ \sum_{F\in \mathcal F_g^i\cap \partial T}\beta h_F\int_{F}k_i[\![\partial_n u_h^i]\!] [\![\zeta\cdot n_F]\!]\ds,\quad \forall \zeta\in (P^{0}(T))^2.
	\end{split}
\end{equation*}

In order to establish the conservation property on the cut cells $T\in \T^{\Gamma}$, we need to extend $f^{i}$ (which is only defined on $T^i$ ) to the whole triangle $T$. Let  $T^i_C=T\setminus T^i$ and $F^i_C=F\setminus F^i$, where $F^i=F\cap\Omega^i$.  Following \cite{Capatina-He}, we define $f^{i}|_{T^i_C}$ in $P^m(T^i_C)$ by:  
\begin{equation*}
	\begin{split}
		\int_{T^i_C}f^ip\dx=&\int_{\Gamma_T}\bigg(\textcolor{black}{[k\nabla u_h\cdot n_{\Gamma}](\omega_i-1)}+\frac{(-1)^i\gamma k_{\Gamma}}{h_T}[u_h]\bigg)p\ds\\
		&+\frac{1}{2}\int_{F^i_C }[\![k_i\partial_n u_h^i]\!]p\ds, \quad \forall p\in P^m(T^i_C).
	\end{split}
\end{equation*}
The proof of the conservation property on the non-cut cells is standard; on a cut cell $T\in \mathcal T_h^{\Gamma}$, we employ as usually Green's formula for  $\displaystyle{\int_{T}}(\div \sigma_h^i)v\dx$ with $v\in P^m(T)$, and we further test the mixed formulation \textcolor{black}{(\ref{eq: Mixed formulation})} with $(v\chi_T,0)$ if $i=1$, and with $(0, v\chi_T)$ if $i=2$. Using that $\bar{T}=\bar{T}^i \cup \bar{T}^i_C$ and integrating by parts the term $\displaystyle{\int_{T^i_C}}k_i\nabla u_h^i\cdot \nabla v\dx$, we finally deduce the conservation property with respect to the extension of $f^i$:
\begin{equation*}
	-(\div\sigma_h^i)_{|T}=\pi^m_Tf^{i}, \quad \forall \ T\in \mathcal{T}_h^i,\,\, i\in\{1,2\}.
\end{equation*}
Note that the global flux $\tilde{\sigma}_h$, defined by $(\tilde{\sigma}_h)_{|\Omega^i}=\sigma_h^i$, does not satisfy the transmission condition on the interface:  $[\tilde{\sigma}_h \cdot n_{\Gamma}] \neq\textcolor{black}{0}$, that is $\tilde{\sigma}_h$ does not belong to $H(\div, \Omega)$. In order to overcome this inconvenience, we propose next a different reconstruction, based on the  Immersed Raviart-Thomas space recently introduced in \cite{IRT}. 

Only the lowest-degree space was considered in \cite{IRT}, so in what follows we restrict ourselves to the case $m=0$.
  
\subsection{Flux recovering in $\IRT^0(\Omega)$}

Let us  recall the definition of the Immersed Raviart-Thomas space of lowest-degree \cite{IRT}. For a cut cell $T\in \T^{\Gamma}$, the local space $\IRT^0(T)$ is the set of functions $\psi$ such that $\psi_{|T^i}=\psi^i$, with $\psi^i\in \mathcal{RT}^0(T)$ satisfying:
\begin{equation*}
	[\psi\cdot n_\Gamma]=0,\quad [k^{-1}\psi\cdot t_\Gamma]=0,\quad \div \psi^1=\div \psi^2.
\end{equation*}
\textcolor{black}{Their} local degrees of freedom are $\dfrac{1}{h_{F_j}}\displaystyle{\int_{F_j}}\psi\cdot n_{F_j} \ds$ for $j\in\{1,2,3\}$, where $F_j \subset \partial T$. 

The global immersed space $\IRT^0(\Omega)$ is then defined as the set of functions $\psi$ satisfying: $\psi_{|T}\in \mathcal{RT}^0(T)$ for $T\in \T\backslash\T^{\Gamma}$,  $\psi_{|T}\in \mathcal{IRT}^0(T)$ for $T\in \T^{\Gamma}$ and
\begin{equation}\label{eq:dof_IRT}
	\int_{F}[\![\psi\cdot n_F]\!]\ds=0,\quad \forall F\in \F^{int}.
\end{equation}
Note that a function $\psi$ of $\IRT^0(\Omega)$ satisfies strongly the transmission condition $ [\psi\cdot n_\Gamma]=0 $ across the interface $\Gamma$. However, the normal traces across the cut \textcolor{black}{edges} are only weakly continuous according to \eqref{eq:dof_IRT}.

We next  reconstruct a flux $\sigma_h\in \IRT^0(\Omega)$. The definition of the degrees of freedom on the non-cut cells is the same as previously; in particular, for $F\in \F^i\backslash \F^\Gamma$ we get 
\begin{equation}\label{eq: def_flux_IRT0_noncut}
\sigma_h\cdot n_F = \langle k_i\nabla u_h^i\cdot n_F\rangle- k_i\pi_F^0\theta_h^i.
\end{equation}
On a cut \textcolor{black}{edge} $F\in \F^{\Gamma}$, we now set:
\begin{equation}\label{eq: def_flux_IRT0}
	\int_{F}\sigma_h\cdot n_F\ds = \sum_{i=1}^{2}\left(\int_{F^i}\langle k_i\nabla u_h^i\cdot n_F\rangle\ds -\int_{F} k_i\theta_h^i\ds\right).
\end{equation}
Note that $\sigma_h\cdot n_F$ is only piecewise constant on the cut \textcolor{black}{edges}, but $\sigma_h$ belongs to $H(\div, T)$ for any $T\in \T$.
We can then establish the following conservation property.

\begin{theorem}\label{thrm: conservation_prop}
	Let $f\in L^2(\Omega)$ defined by $f_{|\Omega^i}=f^i$. Then one has that
	\begin{equation}\label{eq: conservation_property}
		-(\div \sigma_h)_{|T}=\pi^0_T f,\quad \forall T\in\T.
	\end{equation}
\end{theorem}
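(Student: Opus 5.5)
Since $m=0$, $\div\sigma_h$ is constant on each $T$ (for a cut cell, $\div\psi^1=\div\psi^2$ by definition of $\IRT^0(T)$, and the normal trace is continuous across $\Gamma_T$). The divergence theorem on $T$, applied separately on $T^1$ and $T^2$, therefore gives
\[
|T|\,(\div\sigma_h)_{|T}=\sum_{F\subset\partial T}\mathfrak{s}_T^F\int_F\sigma_h\cdot n_F\ds ,
\]
where $\mathfrak{s}_T^F=\pm1$ accounts for the orientation of $n_F$ relative to the outward normal of $T$. The contributions on $\Gamma_T$ cancel because $[\sigma_h\cdot n_\Gamma]=0$. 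The claim $-(\div\sigma_h)_{|T}=\pi^0_Tf$ is thus equivalent to
\[
-\sum_{F\subset\partial T}\mathfrak{s}_T^F\int_F\sigma_h\cdot n_F\ds=\int_T f\dx=\sum_{i}\int_{T^i}f^i\dx .
\]
It therefore suffices to compute the edge fluxes from \eqref{eq: def_flux_IRT0_noncut}--\eqref{eq: def_flux_IRT0} and to compare with the mixed formulation \eqref{eq: Mixed formulation} tested with $v_h=(\chi_T,\chi_T)$. This test function lies in $\DD$ because $\DD^i$ consists of discontinuous $P^1$ functions on $\T^i$. For a non-cut cell $T\subset\Omega^i$ the argument is the standard one from \cite{Dana2016,Aimene}, or equivalently the $m=0$ case of the previous subsection, with $v=1$.

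For a cut cell $T\in\T^\Gamma$, I test \eqref{eq: Mixed formulation} with $v_h=(\chi_T,\chi_T)$ and use $\tilde u_h=u_h$ to evaluate each term.

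\emph{The form $a_i$.} It vanishes, since $\nabla\chi_T=0$ on $T$.

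\emph{The ghost-penalty term $j_i$.} It vanishes as well, because $\partial_n\chi_T=0$.

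\emph{The interface term $a_\Gamma$.} Since $[v_h]=0$ on $\Gamma_T$ and $\nabla v_h=0$, it also vanishes.

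\emph{The term $d_h$.} On $\partial T$ we have $[\![\chi_T]\!]=\pm1$ and $\nabla\chi_T=0$. Hence $-d_h$ contributes
\[
-\sum_i\sum_{F\subset\partial T}\mathfrak{s}_T^F\int_{F\cap\Omega^i}\langle k_i\nabla u_h^i\cdot n_F\rangle\ds .
\]

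\emph{The multiplier term $b_h$.} Because $[\![\chi_T]\!]$ is constant on each edge, the quadrature in $b_h^i$ is exact for $P^1$ multipliers. This gives
\[
b_h(\theta_h,v_h)=\sum_i\sum_{F\subset\partial T}\mathfrak{s}_T^F\int_F k_i\theta_h^i\ds .
\]

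\emph{The right-hand side.} We have $l_h(v_h)=\sum_i\int_{T^i}f^i\dx=\int_Tf\dx$.

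Collecting these terms and comparing with \eqref{eq: def_flux_IRT0}, we obtain $\sum_F\mathfrak{s}_T^F\int_F\sigma_h\cdot n_F\ds=-\int_T f\dx$, which is the desired identity. The sign convention must be checked against that of $[\![\cdot]\!]$, which is oriented from $T_F^-$ to $T_F^+$.

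The main obstacle is that edges of a cut cell need not themselves be cut. An edge $F\subset\partial T$ lying entirely in $\Omega^i$ belongs to $\F^i\setminus\F^\Gamma$ and is governed by \eqref{eq: def_flux_IRT0_noncut}, which involves only $\theta_h^i$. However, it also belongs to $\F^j_h$ for the other index $j$, because $T\in\T^j$. Consequently, testing with $\chi_T$ in both components produces an extra contribution $\int_F k_j\theta_h^j\ds$ and no corresponding $u_h^j$ term, since $F\cap\Omega^j=\emptyset$.

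I will handle this in one of two ways. The first is to interpret \eqref{eq: def_flux_IRT0} as the formula for every edge of $\F^\Gamma$ in the broad sense, meaning edges of cut cells. Then $F^j=\emptyset$ and the formula is still consistent, and the same bookkeeping closes. The second is to exploit the local equations defining $\theta_N^j$ to show that these extra terms vanish on edges outside $\Omega_h^j$'s cut region. I will try the first interpretation, checking that \eqref{eq:dof_IRT} remains consistent with it, since the neighbouring cell then sees the same edge flux.

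Finally, I must make sure that on the cut edges the pieces $F^1,F^2$ and the full-edge multipliers enter with exactly the signs appearing in \eqref{eq: def_flux_IRT0}. Once this is verified, the identity follows for all $T\in\T$.
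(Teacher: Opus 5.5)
Your skeleton is the paper's proof. You reduce the claim to $-\sum_{F\subset\partial T}[\![\chi_T]\!]_{|F}\int_F\sigma_h\cdot n_F\ds=\int_T f\dx$ and test \eqref{eq: Mixed formulation} with $(\chi_T,0)$ or $(0,\chi_T)$ on uncut cells and with $(\chi_T,\chi_T)$ on cut cells. You then note that $a_i$, $j_i$ and $a_\Gamma$ vanish, and match $-d_h(u_h,v_h)+b_h(\theta_h,v_h)$ edge by edge with the flux definitions. Your exactness remark for the nodal quadrature in $b_h^i$ is correct, and so are your signs, since $[\![\chi_T]\!]_{|F}=n_F\cdot n_T$.

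The flaw is in the paragraph you call the main obstacle: its premise is false. The uncut edge $F\subset\Omega^i$ of a cut cell does not belong to $\F^j$ ``because $T\in\T^j$''. $\F^j$ is the set of edges with $F\cap\Omega^j\neq\emptyset$, not the set of edges of elements of $\T^j$. Since $F$ is connected and disjoint from $\Gamma$, it lies in $\bar\Omega^i\setminus\Gamma$, so $F\cap\Omega^j=\emptyset$ and $F\notin\F^j$. Consequently no $\theta_h^j$ lives on $F$, and $b_h^j$ has no term on $F$. The $j$-part of $d_h$ on $F$ is an integral over $F\cap\Omega^j=\emptyset$. The contribution of $F$ is therefore exactly $-[\![\chi_T]\!]_{|F}\int_F(\langle k_i\nabla u_h^i\cdot n_F\rangle-k_i\theta_h^i)\ds=-[\![\chi_T]\!]_{|F}\int_F\sigma_h\cdot n_F\ds$ by \eqref{eq: def_flux_IRT0_noncut}. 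The cut edges are handled by \eqref{eq: def_flux_IRT0}. So the bookkeeping closes with the definitions unchanged, which is what the paper does when it invokes \eqref{eq: def_flux_IRT0_noncut}--\eqref{eq: def_flux_IRT0} together.

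Neither remedy is needed, and your first one, read literally, would break the result. It adds $-\int_F k_j\theta_h^j\ds$ to $\int_F\sigma_h\cdot n_F\ds$, which changes $\sigma_h$. In the generic case the neighbour $T'$ across $F$ is uncut, with $T'\subset\Omega^i$. Its balance comes from testing with $(\chi_{T'},0)$ (resp.\ $(0,\chi_{T'})$), which sees only $\theta_h^i$. So $T'$ would require a value of $\int_F\sigma_h\cdot n_F\ds$ differing from yours by $\int_F k_j\theta_h^j\ds$, and either \eqref{eq:dof_IRT} or conservation on $T'$ would fail.

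There is one special configuration where your concern has some substance: both neighbours of $F$ are cut, and one wants a $\theta_h^j$ on $F$ to keep $v_h^j$ continuous inside $\Omega_h^j$. There, using \eqref{eq: def_flux_IRT0} with $F^j=\emptyset$ is consistent, since both cells are tested with $(\chi_T,\chi_T)$ and $(\chi_{T'},\chi_{T'})$. That is not how the paper defines $\F^j$, nor the case you describe. Replace the obstacle paragraph by the observation $F\notin\F^j$, and your proof is complete.
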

\begin{proof}
\textcolor{black}{Let $T\in\T$.} We start from  $\int_{T} \div \sigma_h\dx= \int_{\partial T}\sigma_h\cdot n_T \ds$ and we use the  flux definition \textcolor{black}{(\ref{eq: def_flux_IRT0_noncut})-(\ref{eq: def_flux_IRT0})}. As previously, we obtain \eqref{eq: conservation_property} on a non-cut cell $T$ by testing the mixed formulation \textcolor{black}{(\ref{eq: Mixed formulation})} with $(\chi_T,0)$ if $T\in \T^1$, and with $(0,\chi_T)$ if $T\in \T^2$. Finally, if $T\in \mathcal T_h^{\Gamma}$ we use the test-function $v_h=(\chi_T,\chi_T)$ in \textcolor{black}{problem (\ref{eq: Mixed formulation})}. The definition of \textcolor{black}{$v_h=(v_h^1,v_h^2)$ yields that on any cell $T'\in \mathcal T_h$ one has that $(\nabla v_h^i)_{|T'}=0$, hence} $a_i(u_h^i,v_h^i)=j_i(u_h^i,v_h^i)=0$ for $i=1,2$. \textcolor{black}{Moreover, one also has that
$$a_{\Gamma}(u_h,v_h)= \sum_{T'\in\T^{\Gamma}}\int_{\Gamma_{T'}}( \frac{\gamma k_{\Gamma}}{h_{T'}}[u_h]-\{K\nabla u_h\cdot n_{\Gamma}\})[v_h]\ds=0,$$
since for any cut cell $T'\in\T^{\Gamma}$, one has $(v_h^1)_{|T'}=(v_h^2)_{|T'}$ and therefore, $[v_h]_{|\Gamma_{T'}}=0$.}
Using next that for any $F\subset \partial T$, one can write that $\sigma_h\cdot n_T =\sigma_h\cdot n_F [\![v_h]\!]$, we get:
\begin{equation*}
			-\int_{T}\div(\sigma_h)\dx=-\sum_{F\in \partial T} \int_{F}\sigma_h\cdot n_F[\![v_h]\!]\ds=\textcolor{black}{-}d_h(u_h,v_h)+b_h(\theta_h,v_h),
\end{equation*} 
\textcolor{black}{where the last equality holds true thanks to \eqref{eq: def_flux_IRT0} and to the fact that $[\![u_h^i]\!]_{|F}=0$ for any $F\in \mathcal{F}_h^i$ ($1\le i\le 2$). Since $a_h(u_h,v_h)=0$ as previously shown, it follows that}
\begin{equation*}	-\int_{T}\div(\sigma_h)\dx=\textcolor{black}{\tilde{a}}_h(u_h,v_h)+b_h(\theta_h,v_h)=l_h(v_h)=\sum_{i=1}^{2}\int_{T^i}f^i\dx,
\end{equation*} 
which yields the desired relation: $-\displaystyle{\int_{T}}\div(\sigma_h)\dx=\displaystyle{\int_{T}}f\dx$.
\end{proof}

\section{Application to a posteriori error analysis}
Let  $\tau_h=K^{-1/2}(\sigma_h-K\nabla u_h)$. In addition to the standard a posteriori  error estimator $\eta_{T}=\|\tau_h\|_{T} $ for any $T\in \mathcal T_h$, we introduce two other local estimators:
\begin{equation*}
	\eta_F=\frac{\sqrt{h_F}}{\sqrt{k_{\Gamma}}}\|[\![\sigma_h\cdot n_F-\pi_F^0\sigma_h\cdot n_F]\!]\|_{F}\,\, \forall F\in \mathcal F_h^{\Gamma},\quad 
	\tilde{\eta}_{T}=\frac{\sqrt{k_{max}}}{h_T}\|[u_h]\|_{T}\,\, \forall T\in \T^\Gamma.
\end{equation*}
We define the corresponding global error estimators and the data approximation by:
\begin{equation*}
	\eta^2 =\sum_{T\in \mathcal{T}_h}\eta_{T}^2,\quad 
	\eta_{\Gamma}^2=\sum_{F\in \F^\Gamma}\eta_F^2+\sum_{T\in \T^\Gamma}\tilde{\eta}_{T}^2,\quad \epsilon(\Omega)^2= \sum_{T\in \mathcal{T}_h}\frac{h_T^2}{k_T}\|f-\pi_T^0f\|_{T}^2,
\end{equation*}
where $k_T=k_i$ if $T\in \T^{i}\backslash\T^\Gamma$ and $k_T=k_{\Gamma}$ if $T\in \T^\Gamma$. For simplicity of notation, for  any $v=(v^1,v^2)\in H^1(\Omega^1)\times H^1(\Omega^2)$ we denote $|v|_{1,K}^2=\displaystyle{\sum_{i=1}^2} k_i\|\nabla v^i\|_{\Omega^i}^2$.

\begin{theorem}[Reliability]
Let $u$  and $u_h$ the solutions of \textcolor{black}{\eqref{eq: continuous_problem_weak}} and \eqref{eq:Primal_conform_Formulation}, respectively. There exists a constant $C$ independent of the mesh, coefficients and  geometry s.t.  
	\begin{equation}
		|u-u_h|_{1,K}\leq \eta +C\left(\eta_{\Gamma}+\epsilon(\Omega)\right).
	\end{equation}
\end{theorem}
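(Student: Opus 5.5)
Our plan is the usual Prager--Synge type argument, adapted to the broken setting. The discrete solution $u_h$ is discontinuous across $\Gamma$, and $\sigma_h$ is only weakly $H(\div)$ on cut edges. We therefore split the error $e=u-u_h$, with $e^i=u-u_h^i$ on $\Omega^i$, into a conforming part and a nonconforming part.

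First, we introduce a conforming approximation $s_h\in H^1_0(\Omega)$ of $u_h$. Off the cut cells it coincides with $u_h$; on $\Omega^1\cup\Omega^2$ near $\Gamma$ we take $s_h=\{u_h\}^*$ or an averaged/lifted correction, so that $s_h-u_h^i$ is supported in the cut cells. Standard trace and inverse estimates, combined with the regularity of the mesh, should give
$|s_h-u_h|_{1,K}\le C\sum_{T\in\T^\Gamma}\frac{\sqrt{k_{max}}}{h_T}\|[u_h]\|_T\le C\eta_\Gamma$.
This is where the weight $k_{max}$ in $\tilde\eta_T$ is used. By the triangle inequality, $|u-u_h|_{1,K}\le |u-s_h|_{1,K}+C\eta_\Gamma$.

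Second, we treat the conforming error. Set $w=u-s_h\in H^1_0(\Omega)$. Then
$|w|_{1,K}^2=\int_\Omega K\nabla u\cdot\nabla w-\int_\Omega K\nabla s_h\cdot\nabla w$.
Using \eqref{eq: continuous_problem_weak}, we write $\int_\Omega K\nabla u\cdot\nabla w=\int_\Omega fw$. We add and subtract $\sigma_h$ and integrate by parts elementwise. Since $\sigma_h\in H(\div,T)$ for every $T$, this gives
$\int_\Omega fw+\sum_T\int_T\div\sigma_h\,w+\sum_{F\in\F^\Gamma}\int_F[\![\sigma_h\cdot n_F]\!]w$.
The cut edges are the only place where normal jumps survive: on non-cut edges the trace is continuous, and the transmission condition holds strongly on $\Gamma$ because $\sigma_h\in\IRT^0(\Omega)$.

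We then bound the three resulting terms.

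(i) For the source term we use Theorem~\ref{thrm: conservation_prop}. It gives $\int_T(f+\div\sigma_h)w=\int_T(f-\pi_T^0f)(w-\pi^0_Tw)$, and Poincaré's inequality bounds this by $\frac{h_T}{\pi\sqrt{k_T}}\|f-\pi^0_Tf\|_T\,\sqrt{k_T}\|\nabla w\|_T$. On cut cells, where $k_T=k_\Gamma\le k_i$, we check that the weight is correctly dominated. This yields the $\epsilon(\Omega)$ contribution.

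(ii) For the edge jumps we use \eqref{eq:dof_IRT}. Since $[\![\sigma_h\cdot n_F]\!]$ has zero mean on $F$, we may subtract $\pi^0_Fw$. We then note that $[\![\sigma_h\cdot n_F]\!]=[\![\sigma_h\cdot n_F-\pi^0_F\sigma_h\cdot n_F]\!]$ because the means agree. A trace inequality of the form $\|w-\pi^0_Fw\|_F\le Ch_F^{1/2}\|\nabla w\|_{\omega_F}$, together with $k_\Gamma\le\min k_i$, gives the bound $C\eta_F|w|_{1,K}$.

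(iii) The remaining term is $\int K^{-1}(\sigma_h-K\nabla s_h)\cdot K\nabla w$. We split it as $\int\tau_h\cdot K^{1/2}\nabla w$ plus a term involving $\nabla(u_h-s_h)$. Cauchy--Schwarz gives $\eta|w|_{1,K}$ with constant exactly one for the first part. The second part is bounded by $C\eta_\Gamma$ from the first step.

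Dividing by $|w|_{1,K}$ and combining gives $|u-u_h|_{1,K}\le\eta+C(\eta_\Gamma+\epsilon(\Omega))$, with constant one in front of $\eta$, as stated.

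The main obstacle is the first step: constructing $s_h$ with a constant independent of how $\Gamma$ cuts $T$ and of the ratio $k_1/k_2$. The difficulty is that $[u_h]$ lives on $\Gamma_T$, while the estimator uses $\|[u_h]\|_T$ on the whole cell through the polynomial extensions $u_h^i$. We would first try defining $s_h$ nodally: on each node of a cut cell, assign the value of $u_h^1$ or $u_h^2$, choosing the subdomain with the larger coefficient. The difference $s_h-u_h^i$ is then a $P^1$ function on $T$ bounded nodewise by $|[u_h]|$, and an inverse estimate controls $\|\nabla(s_h-u_h^i)\|_{T^i}\le Ch_T^{-1}\|[u_h]\|_T$ without any dependence on $|T^i|$. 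The coefficient $k_{max}$ then absorbs the weights.
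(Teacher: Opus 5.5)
Your proof is correct. For the conforming part it uses the paper's ingredients: the conservation property of Theorem~\ref{thrm: conservation_prop} with a Poincar\'e inequality for the data term, the zero-mean jumps \eqref{eq:dof_IRT} with a trace inequality on cut edges, and $k_\Gamma\le k_i$ on cut cells.

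The difference is how you handle the nonconformity of $u_h$. The paper does not test with $u-s_h$ for a concrete $s_h$. It introduces the $K$-energy projection $\varphi\in H^1_0(\Omega)$ of the broken field $u_h$, defined by \eqref{eq: aposteriori_phi}. With $w=u-\varphi$, the cross term $\sum_i\int_{\Omega^i}K\nabla(u_h^i-\varphi)\cdot\nabla w\dx$ vanishes identically, since $u-\varphi$ and $\varphi-u_h$ are $K$-orthogonal. The nonconformity then enters only once, through $|\varphi-u_h|_{1,K}=\inf_{v\in H^1_0(\Omega)}|v-u_h|_{1,K}$, and any conforming $U_h$ can be inserted there afterwards. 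You take $w=u-s_h$ directly and pay with the extra term $\int_\Omega K\nabla_h(u_h-s_h)\cdot\nabla w\dx$, bounded by Cauchy--Schwarz. So $|s_h-u_h|_{1,K}$ is counted twice, which only changes $C$ and keeps the constant one in front of $\eta$. Your route avoids the auxiliary continuous problem; the paper's separates the estimate from the particular reconstruction.

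A few details of your $s_h$ need repair.
\begin{itemize}
\item With the larger-coefficient nodal choice, let every vertex of a cut cell take the value of $u_h^{i}$ with the larger $k_i$. Then $s_h-u_h^{i'}$ ($i'\ne i$) does not vanish on uncut cells of $\Omega^{i'}$ that share such a vertex. This contradicts your description of $s_h$ as coinciding with $u_h$ off the cut cells. The bound still holds: the nodal values there are again $\pm[u_h](N)$, and each vertex lies in boundedly many cells. Away from $\partial\Omega$, this choice even gives $s_h=u_h^{i}$ on every cut cell, so the estimate holds with $k_{\min}$ in place of $k_{\max}$.
\item The function $u_h^{i}$ is only constrained to vanish on $\partial\Omega^{i}\setminus\Gamma$. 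So if $\Gamma$ reaches $\partial\Omega$, your $s_h$ may be nonzero at a boundary vertex of a cut cell lying in $\overline{\Omega^{i'}}$, and then $s_h\notin H^1_0(\Omega)$. Setting $s_h(N)=0$ there fixes this, as does the paper's choice $U_h(N)=u_h^i(N)$ for vertices in $\Omega^i$.
\item The nonconformity bound must be the $\ell^2$ sum $\bigl(\sum_{T\in\T^\Gamma}k_{\max}h_T^{-2}\|[u_h]\|_T^2\bigr)^{1/2}$, not the $\ell^1$ sum you wrote. Only the former is controlled by $C\eta_\Gamma$ uniformly in the number of cut cells.
\end{itemize}
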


\begin{proof}
\textcolor{black}{Let $\sigma=K\nabla u$ and} let $\varphi \in H^1_0(\Omega)$ the unique solution of the weak problem: 
	\begin{equation}\label{eq: aposteriori_phi}
		\int_{\Omega}K\nabla\varphi\cdot \nabla v \dx = \textcolor{black}{\sum_{i=1}^2} \int_{\Omega^{\textcolor{black}{i}}} K\nabla u_h^i \cdot \nabla v \dx, \quad \forall v \in H^1_0(\Omega).
	\end{equation}
By the triangle inequality, we have
	$|u-u_h|_{1,K} \leq  |u-\varphi|_{1,K}+ |\varphi-u_h|_{1,K}$
with 
\begin{equation*}
\begin{split}
|u-\varphi|_{1,K}^2=&\textcolor{black}{\sum_{i=1}^2\int_{\Omega^i}\nabla(u-\varphi)\cdot (K\nabla u-\sigma_h+K^{1/2}\tau_h+K\nabla u_h^i-K\nabla \varphi)\dx}\\
=&\int_{\Omega}\nabla(u-\varphi)\cdot (\sigma-\sigma_h)\dx+\int_{\Omega} K^{1/2}\nabla(u-\varphi)\cdot \tau_h\dx\\
&+\sum_{i=1}^2\int_{\Omega^i}\nabla(u-\varphi)\cdot K\nabla(u_h^i-\varphi)\dx.
\end{split}
\end{equation*}
	The last term vanishes thanks to \eqref{eq: aposteriori_phi}, so \textcolor{black}{the Cauchy-Schwartz inequality gives} 
	\begin{equation}\label{eq: aposteriori_step2}
		|u-\varphi|_{1,K}^2\leq \left \vert \int_{\Omega}\nabla(u-\varphi)\cdot (\sigma-\sigma_h)\dx \right \vert +\eta  |u-\varphi|_{1,K}.
	\end{equation}
	\textcolor{black}{Since $[\![\sigma_h\cdot n_F]\!]_{|F}=0$ on any non-cut edge $F$,} an integration by parts next yields:
	\begin{equation}\label{eq: aposteriori_step3}
		\int_{\Omega}\nabla(u-\varphi)\cdot (\sigma-\sigma_h)\dx =\int_{\Omega}(f-\pi_T^0f)(u-\varphi)\dx -\sum_{F\in\F^\Gamma} \int_{F}[\![\sigma_h\cdot n_F]\!](u-\varphi)\ds.
	\end{equation}
	Using the property of the orthogonal projection, the Cauchy-Schwartz inequality and the fact that $k_{\Gamma}\leq k_i$ for $i\in \{1,\, 2\}$, we obtain in a standard way that:
	\begin{equation}\label{eq: aposteriori_bound_of_f-fh}
		\left\vert  \int_{\Omega}(f-\pi_T^0f)(u-\varphi)\dx\right\vert \leq C |u-\varphi|_{1,K}\, \epsilon(\Omega).
	\end{equation}
	The weak continuity \eqref{eq:dof_IRT} of $\sigma_h\cdot n_F$ across the cut \textcolor{black}{edges} yields, for any $F\in \F^\Gamma$, 
	\begin{equation*}
		\int_{F}[\![\sigma_h\cdot n_F]\!](u-\varphi)\ds = \int_{F}([\![\sigma_h\cdot n_F]\!]-\pi_F^0[\![\sigma_h\cdot n_F]\!])(u-\varphi-\pi_F^0(u-\varphi))\ds.
	\end{equation*}
	Using again $ k_\Gamma\leq k_i$ for $i\in\{1,\, 2\}$ and classical error bounds,
	we get 
	\begin{equation}\label{eq: aposteriori_bound_of_sigma_h}
		\bigg\vert \sum_{F\in\F^\Gamma} \int_{F}[\![\sigma_h\cdot n_F]\!](u-\varphi)\ds\bigg\vert \leq C |u-\varphi|_{1,K}\bigg(\sum_{F\in \F^\Gamma}\eta_F^2\bigg)^{1/2}.
	\end{equation}
	By gathering together \eqref{eq: aposteriori_step2}, \textcolor{black}{\eqref{eq: aposteriori_step3},} \eqref{eq: aposteriori_bound_of_f-fh} and \eqref{eq: aposteriori_bound_of_sigma_h}, we obtain that:
	\begin{equation}\label{eq: aposteriori_bound_1}
		|u-\varphi|_{1,K}\leq \eta + C ( \eta_{\Gamma}+ \epsilon(\Omega)).
	\end{equation}
	
	As regards the remaining term  $|\varphi-u_h|_{1,K}$, we have by definition of $\varphi$ that 
	\begin{equation*}
		|\varphi-u_h|_{1,K}= \inf_{v\in H_0^1(\Omega)}|v-u_h|_{1,K}\leq |U_h-u_h|_{1,K},
	\end{equation*}
	where $U_h$ is any approximation of $u_h$ in $H_0^1(\Omega)$. We choose $U_h$ piecewise linear and continuous, defined by $U_h(N)=u_h^i(N)$ if $N\in \N^i$. Then we have:
	\begin{equation*}
		\begin{split}
			|U_h-u_h|_{1,K}^2 \leq & \sum_{T\in \T^\Gamma}\sum_{i=1}^2 k_i|U_h-u_h^i|^2_{1,T} \leq  C \sum_{T\in \T^\Gamma} \sum_{i=1}^2 \sum_{N\in \mathcal N_T}k_i (U_h-u_h^i)^2(N)\\
			\le & C  \sum_{T\in \T^\Gamma} k_{max} \sum_{N\in \mathcal N_T}(u_h^1-u_h^2)^2(N)\le C\sum_{T\in \T^\Gamma}\frac{k_{max}}{h_T^2}\|[u_h]\|_T^2, 
		\end{split}
	\end{equation*}
	hence $ |\varphi-u_h|_{1,K}\leq  C \eta_{\Gamma}$, which together with \eqref{eq: aposteriori_bound_1} ends the proof.
\end{proof}
\begin{acknowledgement}
This project has received funding from the European Union’s Horizon H2020 Research and Innovation under the Marie Curie Grant Agreement N° 945416.
\end{acknowledgement}

\end{document}